\title{Statistical immersions between statistical manifolds of constant curvature}
\author{\textrm{\textsuperscript{a}Chol Rim Min, Song Ok Choe, Yun Ho An} $~~$ $~~$\\  \\
                  {\small{Faculty of Mathematics, \textbf{Kim Il Sung} University, Pyongyang, D. P. R. Korea}}\\
		  {\small{\textsuperscript{a}Corresponding author. e-mail address: mincholrim@yahoo.com}}
}
\date{}  
\numberwithin{equation}{section}
\newtheorem{Thot}{Theorem}[section]
\newtheorem{Coro}{Corollary}[section]
\newtheorem{Def}{Definition}[section]
\newtheorem{lemm}{Lemma}[section]
\begin{document}
\maketitle      
\begin{abstract}   
The condition for the curvature of a statistical manifold to admit a kind of standard hypersurface is given. We study the statistical hypersurfces of some types of the statistical manifolds $(M, \nabla, g )$, which enable $(M, \nabla^{(\alpha)}, g ),  \forall\alpha\in\mathbf{R}$ to admit the structure of a constant curvature.\\
\end{abstract}

{\bf Keywords:} statistical manifold of a constant curvature, statistical submanifold, Hessian structure, statistical hypersurface

{\bf MSC 2010:}53A15, 53C42, 53B05

\section{Introduction}
\quad 

Since Lauritzen introduced the notation of statistical manifolds in 1987 \cite{laur}, the geometry of statistical manifolds has been developed in close relations with affine differential geometry and Hessian geometry as well as information geometry (see, for example, \cite{amari,kuro,zhang}). In this paper we study the hypersurface of statistical manifolds.

 Let $M$ be an $n$-dimensional manifold, $\nabla$  a torsion-free affine connection on $M$, $g$ a Riemannian metric on  $M$, and $R$  a curvature tensor field on  $M$. We denote by $TM$  the set of vector fields on $M$, and by  $TM^{(r, s)}$ the set of tensor fields of type $(r, s)$  on $M$. 
\begin{Def}\label{def1_1}
\textnormal{A pair} $(\nabla, g)$ \textnormal{is called a} statistical structure \textnormal{on} $M$ \textnormal{if} $(M, \nabla, g )$ \textnormal{is a statistical manifold, that is,}$\nabla$ \textnormal{is a torsion-free affine connection and for all }$X, Y, Z\in T(M), ~ (\nabla_{X}g)(Y,Z)=(\nabla_{Y}g)(X,Z).$
\end{Def}

Let $\nabla^{\circ}$  be a Levi-Civita connection of  $g$. Certainly, a pair $(\nabla^{\circ}, g)$  is a statistical structure, which is called a Riemannian statistical structure or a trivial statistical structure (see \cite{furu}).

On the other hand, the statistical structure is also introduced from affine differential geometry which was proposed by Blasche (see \cite{nomi}). 
Recently the relation between statistical structures and Hessian geometry has been studied (see \cite{furu,shima}).

For all $\alpha\in\mathbf{R}$, a connection $\nabla^{(\alpha)}$ is defined by
\begin{equation*}
	\nabla^{(\alpha)}=\frac{1+\alpha}{2}\nabla+\frac{1-\alpha}{2}\nabla^{*}
\end{equation*}
where $\nabla$  and $\nabla^{*}$  are dual connections on $M$. 
We study a statistical hypersurface of a statistical manifold $(M, \nabla, g )$ which enables $(M, \nabla^{(\alpha)}, g ),  \forall\alpha\in\mathbf{R} $  to admit the structure of a constant curvature.

In section 3, a statistical manifold $(M, \nabla, g )$, which enables $(M, \nabla^{(\alpha)}, g ),  \forall\alpha\in\mathbf{R}$ to admit the structure of a constant curvature, is considered. 
In section 4, we study characteristics of statistical immersions between statistical manifolds $(M, \nabla, g )$ which enable  $(M, \nabla^{(\alpha)}, g ),  \forall\alpha\in\mathbf{R}$  to admit the structure of a constant curvature. 

                                              
\section{Preliminaries}
\quad 

A statistical manifold $(M, \nabla, g)$  is said to be of constant curvature  $k\in\mathbf{R}$ if 
\begin{equation}
 R(X, Y)Z=k\{ g(Y, Z)X-g(X, Z)Y\} , \forall X, Y,Z\in TM \tag{2.1}
 \end{equation}
holds, where  $R$ is the curvature tensor field of $\nabla$. A pair $(\nabla, g)$  is called a Hessian structure if a statistical manifold  $(M, \nabla, g)$ is of constant curvature 0.

A Riemannian metric $g$  on a flat manifold $(M,  g)$  is called a Hessian metric if $g$ can be locally expressed by 
\begin{equation*}
 g=Dd\varphi, 
 \end{equation*}
that is, 
\begin{equation*}
 g_{ij}=\frac{\partial ^{2}\varphi}{\partial x^{i}\partial x^{j}}, 
 \end{equation*}
 where  $\{ x^{1} , \cdot \cdot \cdot , x^{n} \}$ is an affine coordinate system with respect to $\nabla$. Then $(M, \nabla, g)$  is called a Hessian manifold (see \cite{shima}).

Let $(M, \nabla, g)$  be a Hessian manifold and $K(X, Y) :=\nabla _{X}Y-\nabla ^{\circ}_{X}Y$  be the difference tensor between the Levi-Civita connection $\nabla ^{\circ}$  of  $g$ and $\nabla$. A covariant differential of differential tensor $K$ is called a Hessian curvature tensor for $(\nabla, g)$. A Hessian manifold  $(M, \nabla, g)$ is said to be of constant Hessian curvature $c\in\mathbf{R}$   if 
\begin{equation*}
(\nabla _{X}K)(Y, Z)=-\frac{c}{2}\{ g(X, Y)Z+g(X, Z)Y\} ,\forall X,Y,Z\in TM
 \end{equation*}
holds (see \cite{shima}).\\ 


\noindent\textbf{Example 2.1.}$(\cite{furu})$ 
 Let $(H, \tilde g)$  
 be the upper half space:
\begin{equation*}
H:=\left\{ \left. y=(y^{1}, \cdots, y^{n+1})^{T}\in \mathbf{R}^{n+1} \right\vert y^{n+1}>0 \right\} , \tilde g :=(y^{n+1})^{-2} \sum_{i=1}^{n+1}dy^{i}dy^{i}.
\end{equation*}

We define an affine connection $\tilde \nabla$   on $H$  by the following relations:
\begin{equation*}
\tilde \nabla_{\frac{\partial }{\partial y^{n+1}}}\frac{\partial }{\partial y^{n+1}}=(y^{n+1})^{-1}\frac{\partial }{\partial y^{n+1}}, \tilde \nabla_{\frac{\partial }{\partial y^{i}}}\frac{\partial }{\partial y^{j}}=2\delta _{ij}(y^{n+1})^{-1}\frac{\partial }{\partial y^{n+1}},
\end{equation*}
\begin{equation*}
\tilde \nabla_{\frac{\partial }{\partial y^{i}}}\frac{\partial }{\partial y^{n+1}}=\tilde \nabla_{\frac{\partial }{\partial y^{n+1}}}\frac{\partial }{\partial y^{j}}=0,
\end{equation*}
where $i, j=1, \cdot \cdot \cdot, n$. Then $(H, \tilde \nabla, \tilde g)$  is a Hessian manifold of constant Hessian curvature 4.

Let  $(\tilde M, \tilde \nabla, \tilde g)$ be a statistical manifold and  $f:M\rightarrow  \tilde M$ be an immersion. We define $g$  and $\nabla$  on  $M$ by 
\begin{equation*}
g=f^{*}\tilde g , \quad g(\nabla_{X}Y, Z)=\tilde g(\tilde \nabla _{X}f_{*}Y,f_{*}Z), \quad \forall X, Y, Z\in TM.
\end{equation*}
Then the pair $(\nabla, g)$  is a statistical structure on $M$, which is called the one by $f$  from $(\tilde \nabla, \tilde g)$ (see \cite{furu}).  
 
Let $(M, \nabla, g)$  and  $(\tilde M, \tilde \nabla, \tilde g)$ be two statistical manifolds. An immersion $f:M\rightarrow  \tilde M$  is called a statistical immersion if $(\nabla, g)$  coincides with the induced statistical structure (see \cite{furu}).

Let $f:(M, \nabla, g)\rightarrow  (\tilde M, \tilde \nabla, \tilde g)$   be a statistical immersion of codimension one, and  $\xi$  a unit normal vector field of $f$. Then we define  $h, h^{*}\in TM^{(0, 2)}$, $\tau,\tau^{*}\in TM^{*}$  and  $A, A^{*}\in TM^{(1, 1)}$ by the following Gauss and Weingarten formulae:
\begin{equation*}
\tilde\nabla_{X}f_{*}Y=f_{*}\nabla_{X}Y+h(X, Y)\xi , \quad \tilde\nabla_{X}\xi=-f_{*}A^{*}X+\tau^{*}(X)\xi,
\end{equation*}
\begin{equation*}
\tilde\nabla^{*}_{X}f_{*}Y=f_{*}\nabla^{*}_{X}Y+h^{*}(X, Y)\xi  ,\quad \tilde\nabla^{*}_{X}\xi=-f_{*}AX+\tau(X)\xi,\quad \forall X,Y\in TM,
\end{equation*}
where $\tilde\nabla^{*}$  is the dual connection of $\tilde\nabla$  with respect to  $\tilde g$.

In addition, we define $II \in TM^{(0, 2)}$  and  $S \in TM^{(1, 1)}$  by using the Riemannian Gauss and Weingarten formulae:
\begin{equation*}
\tilde\nabla^{*}_{X}f_{*}Y=f_{*}\nabla^{*}_{X}Y+II(X, Y)\xi ,\quad \tilde\nabla^{*}_{X}\xi=-f_{*}SX.
\end{equation*}

For more details on the Gauss, Codazzi and Ricci formulae on statistical hypersurfaces, we refer to \cite{furu}.

                                           
\section{The condition that a statistical manifold  $(M, \nabla^{(\alpha)}, g )$ is of constant curvature for any $\alpha\in\mathbf{R}$ }
\quad

In this section we consider a condition that a statistical manifold  $(M, \nabla^{(\alpha)}, g )$ is of constant curvature for any $\alpha\in\mathbf{R}$ .
\begin{Thot}\label{theo3_1}
A statistical manifold $(M, \nabla^{(\alpha)}, g )$ is of constant curvature for any $\alpha\in\mathbf{R}$  iff there exist  $\alpha_{1}, \alpha_{2}\in\mathbf{R}(|\alpha_{1}|\neq|\alpha_{2}|) $  such that statistical manifolds $(M, \nabla^{(\alpha_{1})}, g )$  and $(M, \nabla^{(\alpha_{2})}, g )$  are of constant curvature. 
\end{Thot}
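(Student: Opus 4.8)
The forward implication is immediate: if $(M,\nabla^{(\alpha)},g)$ has constant curvature for every $\alpha$, simply pick any two parameters with distinct absolute values. The content is the converse, and the plan is to exploit the fact that the curvature of $\nabla^{(\alpha)}$ is a \emph{quadratic polynomial in $\alpha$} with tensorial coefficients. First I would write $K(X,Y)=\nabla_{X}Y-\nabla^{\circ}_{X}Y$ for the difference tensor, so that $\nabla=\nabla^{\circ}+K$, $\nabla^{*}=\nabla^{\circ}-K$, and consequently $\nabla^{(\alpha)}=\nabla^{\circ}+\alpha K$. A routine computation using that $\nabla^{\circ}$ is torsion-free then yields
\begin{equation*}
R^{(\alpha)}(X,Y)Z=R^{\circ}(X,Y)Z+\alpha\, P(X,Y)Z+\alpha^{2}\, Q(X,Y)Z,
\end{equation*}
where $P(X,Y)Z=(\nabla^{\circ}_{X}K)(Y,Z)-(\nabla^{\circ}_{Y}K)(X,Z)$ and $Q(X,Y)Z=K(X,K(Y,Z))-K(Y,K(X,Z))$, and $R^{\circ}$ is the curvature of $\nabla^{\circ}$. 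Writing $G(X,Y)Z:=g(Y,Z)X-g(X,Z)Y$ for the model tensor, constant curvature $k$ at the parameter $\alpha$ means exactly $R^{(\alpha)}=k\,G$.

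The key step, and what I expect to be the main obstacle, is that two data points cannot by themselves pin down the three tensor coefficients $R^{\circ},P,Q$; I must first eliminate the linear term $P$. The plan here is to use duality. Since the cubic form $C(X,Y,Z)=g(K(X,Y),Z)$ is totally symmetric (this is precisely the statistical condition of Definition \ref{def1_1}), one verifies that the $g$-dual of $\nabla^{(\alpha)}$ is $\nabla^{(-\alpha)}$. For dual connections the curvatures obey $g(R^{(\alpha)}(X,Y)Z,W)+g(Z,R^{(-\alpha)}(X,Y)W)=0$, while the model tensor is itself skew in its last slot, $g(G(X,Y)Z,W)+g(Z,G(X,Y)W)=0$. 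Comparing these two identities shows that $R^{(\alpha)}=k\,G$ forces $R^{(-\alpha)}=k\,G$ with the \emph{same} constant $k$. Because $|\alpha_{1}|\neq|\alpha_{2}|$ prevents both parameters from vanishing, at least one of $\alpha_{1},\alpha_{2}$ is nonzero; applying the above at that parameter, say $\alpha_{1}\neq 0$, gives $R^{(\alpha_{1})}=R^{(-\alpha_{1})}=k_{1}G$, and subtracting the expansions at $+\alpha_{1}$ and $-\alpha_{1}$ yields $2\alpha_{1}P=0$, hence $P=0$.

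With the linear term gone, $R^{(\alpha)}=R^{\circ}+\alpha^{2}Q$, and it remains to solve a $2\times 2$ linear system. From $R^{\circ}+\alpha_{i}^{2}Q=k_{i}G$ for $i=1,2$, and using $\alpha_{1}^{2}\neq\alpha_{2}^{2}$ (again guaranteed by $|\alpha_{1}|\neq|\alpha_{2}|$), I would subtract to obtain $Q=\dfrac{k_{1}-k_{2}}{\alpha_{1}^{2}-\alpha_{2}^{2}}\,G$ and then back-substitute to get $R^{\circ}=\Bigl(k_{1}-\alpha_{1}^{2}\dfrac{k_{1}-k_{2}}{\alpha_{1}^{2}-\alpha_{2}^{2}}\Bigr)G$. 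Thus $R^{\circ}=r\,G$ and $Q=q\,G$ for constants $r,q$, and therefore $R^{(\alpha)}=(r+q\alpha^{2})\,G$ for every $\alpha\in\mathbf{R}$, which is constant curvature at every parameter with constant $k(\alpha)=r+q\alpha^{2}$.

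The whole argument hinges on the duality-driven vanishing of $P$: this is why the hypothesis must be the sharper $|\alpha_{1}|\neq|\alpha_{2}|$ rather than merely $\alpha_{1}\neq\alpha_{2}$. Two opposite parameters $\alpha$ and $-\alpha$ carry identical information (they force the same constant), so they would leave $P$ undetermined and the conclusion would fail; the distinctness of absolute values is exactly what supplies a genuinely nonzero parameter together with $\alpha_{1}^{2}\neq\alpha_{2}^{2}$. I would also double-check the dual-connection curvature identity and the derivation $(\nabla^{(\alpha)})^{*}=\nabla^{(-\alpha)}$ carefully, since both rest on the total symmetry of $C$, which is the one place the statistical structure (as opposed to an arbitrary pair of a connection and a metric) is genuinely used.
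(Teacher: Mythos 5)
Your proof is correct, and it follows the same underlying strategy as the paper --- the curvature $R^{(\alpha)}$ is quadratic in $\alpha$, duality kills the linear term, and two data points with $\alpha_1^2\neq\alpha_2^2$ then determine the remaining two coefficients --- but the decomposition you use is different. The paper writes $\nabla^{(\alpha)}=\frac{\alpha_1+\alpha}{2\alpha_1}\nabla^{(\alpha_1)}+\frac{\alpha_1-\alpha}{2\alpha_1}\nabla^{(-\alpha_1)}$ and invokes a curvature formula for affine combinations of connections, so that the quadratic term appears with coefficient $\frac{\alpha_1^2-\alpha^2}{4\alpha_1^2}$; you instead expand around the Levi-Civita connection, $\nabla^{(\alpha)}=\nabla^{\circ}+\alpha K$, giving $R^{(\alpha)}=R^{\circ}+\alpha P+\alpha^2 Q$. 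The two expansions encode the same polynomial, but yours makes explicit a step the paper leaves tacit: the paper's final formula, which involves only $k_1$ and $k_2$, silently uses that $(M,\nabla^{(-\alpha_1)},g)$ has the \emph{same} constant curvature $k_1$ as $(M,\nabla^{(\alpha_1)},g)$. You supply the justification via $(\nabla^{(\alpha)})^{*}=\nabla^{(-\alpha)}$, the dual-curvature identity $g(R^{(\alpha)}(X,Y)Z,W)+g(Z,R^{(-\alpha)}(X,Y)W)=0$, and the skewness of the model tensor $G$ in its last two slots; this is the one place the total symmetry of the cubic form is needed, and your remark on why $|\alpha_1|\neq|\alpha_2|$ (rather than $\alpha_1\neq\alpha_2$) is the right hypothesis is exactly on point. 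Your closed form $k(\alpha)=r+q\alpha^2$ agrees with the paper's $\frac{k_2\alpha_1^2-k_1\alpha_2^2+(k_1-k_2)\alpha^2}{\alpha_1^2-\alpha_2^2}$.
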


\begin{proof}
Necessity is obvious. We find sufficiency.
Without loss of generality, we assume $\alpha_{1}\neq 0$. Then since 
\begin{equation*}
	\nabla^{(\alpha)}=\frac{\alpha_{1}+\alpha}{2\alpha_{1}}\nabla^{(\alpha_{1})}+\frac{\alpha_{1}-\alpha}{2\alpha_{1}}\nabla^{(-\alpha_{1})}
\end{equation*}
holds for all $\alpha\in\mathbf{R}$, the following relation
\begin{eqnarray}
&&	R^{(\alpha)}(X,Y)Z=\frac{\alpha_{1}+\alpha}{2\alpha_{1}}R^{(\alpha_{1})}(X,Y)Z+\frac{\alpha_{1}-\alpha}{2\alpha_{1}}R^{(-\alpha_{1})}(X,Y)Z\nonumber\\
&& \qquad \qquad \qquad \qquad +\frac{\alpha_{1}^{2}-\alpha^{2}}{4\alpha_{1}^{2}}[K(Y,K(Z,X))-K(X,K(Y,Z))]\nonumber
\end{eqnarray}
holds, where $K(X, Y) :=\nabla _{X}Y-\nabla ^{\circ }_{X}Y$  is the difference tensor field of a statistical manifold.

From the relations 
\begin{equation*}
	R^{(\alpha_{1})}(X,Y)Z=k_{1}\{ g(Y,Z)X-g(X,Z)Y\},
\end{equation*}
\begin{equation*}
	R^{(\alpha_{2})}(X,Y)Z=k_{2}\{ g(Y,Z)X-g(X,Z)Y\},
\end{equation*}
the relation
\begin{equation*}
	R^{(\alpha)}(X,Y)Z=\frac{k_{2}\alpha_{1}^{2}-k_{1}\alpha_{2}^{2}+(k_{1}-k_{2})\alpha^{2}}{\alpha_{1}^{2}-\alpha_{2}^{2}}\{ g(Y,Z)X-g(X,Z)Y\}
\end{equation*}
holds, that is, a statistical manifold  $(M, \nabla^{(\alpha)}, g )$ is of constant curvature $\frac{k_{2}\alpha_{1}^{2}-k_{1}\alpha_{2}^{2}+(k_{1}-k_{2})\alpha^{2}}{\alpha_{1}^{2}-\alpha_{2}^{2}}$.
\end{proof}


\noindent\textbf{Example 3.1.}
Let $(M, g )$  be a family of normal distributions:
\begin{equation*}
	M:=\left\{ p(x,\theta ) \left\vert p(x,\theta)=\frac{1}{\sqrt{2\pi (\theta^{2})^{2}}}\exp \left\{ -\frac{1}{2(\theta^{2})^{2}}(x-\theta^{1})^{2}\right\} \right. \right\},\quad g:=2(\theta^{2})^{-2}\sum d\theta^{i}d\theta^{i},
\end{equation*}
\begin{equation*}
	x\in \mathbf{R},\quad \theta^{1}\in\mathbf{R},\quad \theta^{2}>0.
\end{equation*}

We define an $\alpha -$ connection by the following relations:
\begin{equation*}
 \nabla^ {(\alpha)}_{\frac{\partial }{\partial \theta^{1}}}\frac{\partial }{\partial \theta^{1}}=(-1+2\alpha )(\theta^{2})^{-1}\frac{\partial}{\partial \theta^{2}} ,\quad
 \nabla^ {(\alpha)}_{\frac{\partial }{\partial \theta^{2}}}\frac{\partial }{\partial \theta^{2}}=(1+\alpha )(\theta^{2})^{-1}\frac{\partial}{\partial \theta^{2}} ,
\end{equation*}
\begin{equation*}
  \nabla^ {(\alpha)}_{\frac{\partial }{\partial \theta^{1}}}\frac{\partial }{\partial \theta^{2}}=\nabla^ {(\alpha)}_{\frac{\partial }{\partial \theta^{2}}}\frac{\partial }{\partial \theta^{1}}=0.
\end{equation*}
Then the statistical manifold $(M, \nabla^{(0)}, g )$   is of constant curvature $(-\frac{1}{2})$, and the statistical manifold $(M, \nabla^{(1)}, g )$  is of constant curvature 0. Hence for all $\alpha\in\mathbf{R}$, the statistical manifold $(M, \nabla^{(\alpha)}, g )$   is of constant curvature  $\frac{\alpha^{2}-1}{2}$.\\


\noindent\textbf{Example 3.2.}
Let $(M, g )$  be a family of random walk  distributions (\cite{abdel}):
\begin{equation*}
	M:=\left\{ p(x; \theta^{1},\theta^{2} ) \left\vert p(x; \theta^{1},\theta^{2})=\sqrt{\frac{\theta^{2}}{2\pi x}}\exp \left\{ -\frac{\theta^{2}x}{2}+\frac{\theta^{2}}{\theta^{1}}- \frac{\theta^{2}}{2(\theta^{1})^2 x}\right\} ,\quad x,\mu , \lambda >0 \right. \right\},
\end{equation*}
\begin{equation*}
 g:=\frac{\theta^{2}}{(\theta^{1})^{3}}(d\theta^{1})^{2}+\frac{1}{2(\theta^{2})^{2}}(d\theta^{2})^{2}.
\end{equation*}

We define an $\alpha -$ connection by the following relations:
\begin{equation*}
 \nabla^ {(\alpha)}_{\frac{\partial }{\partial \theta^{1}}}\frac{\partial }{\partial \theta^{1}}=\frac{-3(1+\alpha )}{2}(\theta^{1})^{-1}\frac{\partial}{\partial \theta^{1}}+(-1+\alpha )(\theta^{1})^{-3}(\theta^{2})^{2}\frac{\partial}{\partial \theta^{2}} ,
 \end{equation*}
 \begin{equation*}
 \nabla^ {(\alpha)}_{\frac{\partial }{\partial \theta^{1}}}\frac{\partial }{\partial \theta^{2}}= \nabla^ {(\alpha)}_{\frac{\partial }{\partial \theta^{2}}}\frac{\partial }{\partial \theta^{1}}=\frac{(1+\alpha )}{2}(\theta^{2})^{-1}\frac{\partial}{\partial \theta^{1}} ,
\end{equation*}
\begin{equation*}
 \nabla^ {(\alpha)}_{\frac{\partial }{\partial \theta^{2}}}\frac{\partial }{\partial \theta^{2}}= (-1+\alpha )(\theta^{2})^{-1}\frac{\partial}{\partial \theta^{2}}.
\end{equation*}
Then the statistical manifold $(M, \nabla^{(0)}, g )$   is of constant curvature $(-\frac{1}{2})$, and the statistical manifold $(M, \nabla^{(1)}, g )$  is of constant curvature 0. Hence for all $\alpha\in\mathbf{R}$, the statistical manifold $(M, \nabla^{(\alpha)}, g )$   is of constant curvature  $\frac{\alpha^{2}-1}{2}$.

Theorem \ref{theo3_1} implies the following fact. 


\begin{Coro}\label{coro3_1}
  If there exist  $\alpha_{1}, \alpha_{2}\in\mathbf{R} (|\alpha_{1}|\neq|\alpha_{2}|)$   such that the statistical manifold $(M, \nabla^{(\alpha_{1})}, g )$  is of constant curvature $k_{1}$  and the statistical manifold $(M, \nabla^{(\alpha_{2})}, g )$   is of constant curvature  $k_{2}$, and $k_{1}\neq k_{2}$, then for $\alpha\in\mathbf{R}$  satisfying that $\alpha^{2}=(k_{2}\alpha_{1}^2-k_{1}\alpha_{2}^2)/(k_{2}-k_{1})$, the statistical manifold  $(M, \nabla^{(\alpha)}, g )$ is flat.
\end{Coro}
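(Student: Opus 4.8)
The plan is to read the result off directly from the constant-curvature formula established in Theorem \ref{theo3_1}. The hypotheses of the corollary are exactly those of that theorem: two connections $\nabla^{(\alpha_{1})}$ and $\nabla^{(\alpha_{2})}$ with $|\alpha_{1}|\neq|\alpha_{2}|$ give statistical manifolds of constant curvature $k_{1}$ and $k_{2}$. Hence Theorem \ref{theo3_1} applies and guarantees that for every $\alpha\in\mathbf{R}$ the manifold $(M,\nabla^{(\alpha)},g)$ is of constant curvature
\[
K(\alpha)=\frac{k_{2}\alpha_{1}^{2}-k_{1}\alpha_{2}^{2}+(k_{1}-k_{2})\alpha^{2}}{\alpha_{1}^{2}-\alpha_{2}^{2}}.
\]

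First I would observe that flatness means constant curvature $0$, so the task reduces to solving $K(\alpha)=0$. Since $|\alpha_{1}|\neq|\alpha_{2}|$, the denominator $\alpha_{1}^{2}-\alpha_{2}^{2}$ is nonzero, so $K(\alpha)=0$ is equivalent to the vanishing of the numerator,
\[
k_{2}\alpha_{1}^{2}-k_{1}\alpha_{2}^{2}+(k_{1}-k_{2})\alpha^{2}=0.
\]
Next I would invoke the additional hypothesis $k_{1}\neq k_{2}$: it permits division by $k_{1}-k_{2}$ and yields
\[
\alpha^{2}=\frac{k_{1}\alpha_{2}^{2}-k_{2}\alpha_{1}^{2}}{k_{1}-k_{2}}=\frac{k_{2}\alpha_{1}^{2}-k_{1}\alpha_{2}^{2}}{k_{2}-k_{1}},
\]
which is precisely the value of $\alpha^{2}$ asserted in the statement. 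Substituting this back into $K(\alpha)$ confirms $K(\alpha)=0$, so $(M,\nabla^{(\alpha)},g)$ is flat.

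There is essentially no analytic obstacle here; the argument is purely algebraic, and the two non-degeneracy conditions $|\alpha_{1}|\neq|\alpha_{2}|$ and $k_{1}\neq k_{2}$ are each used exactly once, to ensure the respective denominators do not vanish. The only point worth flagging is existence: the conclusion is stated conditionally (``for $\alpha$ satisfying $\alpha^{2}=\dots$''), so I would not need to prove that a real root exists. Still, I would remark that a genuine real $\alpha$ is produced precisely when the right-hand side is nonnegative, and that otherwise the statement holds vacuously.
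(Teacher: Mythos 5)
Your proof is correct and matches the paper exactly: the paper gives no separate argument for the corollary beyond noting that it follows from Theorem \ref{theo3_1}, and the intended reasoning is precisely your computation of setting the constant curvature $\frac{k_{2}\alpha_{1}^{2}-k_{1}\alpha_{2}^{2}+(k_{1}-k_{2})\alpha^{2}}{\alpha_{1}^{2}-\alpha_{2}^{2}}$ equal to zero and solving for $\alpha^{2}$. Your additional remark about when a real root exists is a sensible observation the paper omits.
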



\noindent\textbf{Example 3.3.}
$k_{1}=-1/2$, $k_{2}=0$, $\alpha_{1}=0$  and $\alpha_{2}=1$  hold in example 3.1 and example 3.2. Hence for $\alpha\in\mathbf{R}$  satisfying that $\alpha^{2}=1$, the statistical manifold $(M, \nabla^{(\alpha)}, g )$  is flat.


\begin{Thot}\label{theo3_2}
If the Hessian manifold  $(M, \nabla, g )$ is of constant Hessian curvature, then for  all $\alpha\in\mathbf{R}$, the statistical manifold  $(M, \nabla^{(\alpha)}, g )$ is of constant curvature.
\end{Thot}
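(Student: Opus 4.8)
The plan is to use that $\nabla^{(\alpha)}$ differs from the Levi-Civita connection $\nabla^{\circ}$ of $g$ by a multiple of the difference tensor. Writing $K(X,Y)=\nabla_{X}Y-\nabla^{\circ}_{X}Y$ and recalling the standard identity $\nabla^{\circ}=\tfrac12(\nabla+\nabla^{*})$, one obtains $\nabla^{(\alpha)}=\nabla^{\circ}+\alpha K$. Substituting this into the curvature formula for a connection of the form $\nabla^{\circ}+\alpha K$ produces a polynomial identity, quadratic in $\alpha$:
\begin{equation*}
R^{(\alpha)}(X,Y)Z=R^{\circ}(X,Y)Z+\alpha\,\mathcal{C}(X,Y)Z+\alpha^{2}\mathcal{Q}(X,Y)Z,
\end{equation*}
where $R^{\circ}$ is the Riemannian curvature of $g$, $\mathcal{C}(X,Y)Z:=(\nabla^{\circ}_{X}K)(Y,Z)-(\nabla^{\circ}_{Y}K)(X,Z)$, and $\mathcal{Q}(X,Y)Z:=K(X,K(Y,Z))-K(Y,K(X,Z))$. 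First I would record this identity, since the entire proof reduces to identifying its three tensor coefficients.

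I would then reduce to Theorem \ref{theo3_1}: it suffices to exhibit two values $\alpha_{1},\alpha_{2}$ with $|\alpha_{1}|\neq|\alpha_{2}|$ at which $(M,\nabla^{(\alpha)},g)$ has constant curvature. Since $(M,\nabla,g)$ is Hessian, $\nabla=\nabla^{(1)}$ is flat, so $\alpha_{1}=1$ works with $k_{1}=0$. For the second value I would take the Levi-Civita connection, $\alpha_{2}=0$; here one must resist the tempting choice $\alpha=-1$, because $\nabla^{(-1)}=\nabla^{*}$ is also flat but $|-1|=|1|$ and Theorem \ref{theo3_1} would not apply. Everything therefore comes down to showing that $R^{\circ}=R^{(0)}$ is of constant sectional curvature.

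This is the main step and the main obstacle. Since the dual of a flat connection is flat, $\nabla^{*}=\nabla^{(-1)}$ is flat as well; evaluating the quadratic identity at $\alpha=\pm1$, where the left side vanishes, and subtracting gives $\mathcal{C}\equiv0$, while adding gives $R^{\circ}=-\mathcal{Q}$. To bring in the constant $c$, I would transfer the hypothesis, which is phrased through $\nabla_{X}K$, into the $\nabla^{\circ}$-picture by comparing the two covariant derivatives:
\begin{equation*}
(\nabla_{X}K)(Y,Z)-(\nabla^{\circ}_{X}K)(Y,Z)=K(X,K(Y,Z))-K(K(X,Y),Z)-K(Y,K(X,Z)).
\end{equation*}
Antisymmetrising this in $X,Y$ and using that $K$ is symmetric together with $\mathcal{C}\equiv0$ leaves $(\nabla_{X}K)(Y,Z)-(\nabla_{Y}K)(X,Z)=2\,\mathcal{Q}(X,Y)Z$; this bookkeeping is the delicate part of the argument. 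The constant Hessian curvature condition makes the left-hand side equal to $\tfrac{c}{2}\{g(Y,Z)X-g(X,Z)Y\}$, whence $\mathcal{Q}(X,Y)Z=\tfrac{c}{4}\{g(Y,Z)X-g(X,Z)Y\}$ and $R^{\circ}=-\mathcal{Q}=-\tfrac{c}{4}\{g(Y,Z)X-g(X,Z)Y\}$.

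Finally I would conclude. The relation just obtained says $(M,\nabla^{(0)},g)$ has constant curvature $k_{2}=-c/4$, so with $(\alpha_{1},k_{1})=(1,0)$ and $(\alpha_{2},k_{2})=(0,-c/4)$ and $|\alpha_{1}|\neq|\alpha_{2}|$, Theorem \ref{theo3_1} yields constant curvature for every $\alpha$, with explicit value $\tfrac{c(\alpha^{2}-1)}{4}$ (consistent with Examples 3.1 and 3.2, where $c=2$). Alternatively, substituting $\mathcal{Q}=\tfrac{c}{4}\{\cdots\}$ and $R^{\circ}=-\mathcal{Q}$ directly into the quadratic identity reproduces $R^{(\alpha)}=\tfrac{c(\alpha^{2}-1)}{4}\{g(Y,Z)X-g(X,Z)Y\}$, bypassing Theorem \ref{theo3_1} altogether.
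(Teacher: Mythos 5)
Your proof is correct and follows essentially the same route as the paper: establish that the Levi-Civita connection $\nabla^{(0)}=\nabla^{\circ}$ has constant curvature $-c/4$, note that $\nabla^{(1)}=\nabla$ is flat, and invoke Theorem \ref{theo3_1}. The only difference is bookkeeping --- you extract $\mathcal{C}\equiv 0$ and $R^{\circ}=-\mathcal{Q}$ by evaluating the quadratic-in-$\alpha$ curvature expansion at $\alpha=\pm 1$, whereas the paper reaches the same identity $R^{\circ}(X,Y)Z=-\tfrac{c}{4}\{g(Y,Z)X-g(X,Z)Y\}$ through two displayed relations linking $R^{\circ}$, $R$, $R^{*}$, $\nabla K$ and $K(\cdot,K(\cdot,\cdot))$.
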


\begin{proof}
If the Hessian manifold  $(M, \nabla, g )$ is of constant Hessian curvature, then for all $X,Y,Z\in TM$, 
\begin{equation*}
(\nabla K)(Y, Z;X)=-\frac{c}{2}\{ g(X, Y)Z+g(X, Z)Y\} ,c\in\mathbf{R}
\end{equation*}
holds. On the other hand, the curvature tensor $R^{\circ}$  of Levi-Civita connection  $\nabla^{\circ}$ is written by
\begin{eqnarray}
&&	R^{\circ}(X,Y)Z=R(X,Y)Z-(\nabla K)(Y,Z;X)+(\nabla K)(Z,X;Y)\nonumber\\
&& \qquad \qquad \qquad +K(X,K(Y,Z))-K(Y,K(Z,X)), \nonumber
\end{eqnarray}
where $R$  is the curvature tensor of $\nabla$  and  $K(X, Y) =\nabla _{X}Y-\nabla ^{\circ }_{X}Y$  is difference tensor. 
Then 
\begin{eqnarray}
&& (\nabla K)(Y,Z;X)-(\nabla K)(Z,X;Y)\nonumber\\
&& \qquad =2\{ K(X,K(Y,Z))-K(Y,K(Z,X))\} + \frac {1}{2} \{R(X,Y)Z-R^{*}(X,Y)Z\} \nonumber
\end{eqnarray}
implies
\begin{equation*}
R^{\circ }(X, Y)Z=-\frac{c}{4}\{ g(Y, Z)X-g(X, Z)Y\}, 
\end{equation*}
where $R^{*}$  is curvature tensor of dual connection  $\nabla^{*}$, that is, the statistical manifold $(M, \nabla^{\circ}, g )$ is of constant curvature. On the other hand, the statistical manifold $(M, \nabla, g )$  is flat, that is, constant curvature 0. Therefore we finish the proof of theorem by applying Theorem \ref{theo3_1}.
\end{proof}

Hitherto we found some conditions that for any $\alpha\in\mathbf{R}$, the statistical manifold $(M, \nabla^{(\alpha)}, g )$  is of constant curvature.

                                          
\section{The hypersurfaces of statistical manifolds of constant curvature }
\quad

We consider statistical hypersurfaces of some type of statistical manifolds, which enable for any $\alpha\in\mathbf{R}$  a statistical manifold  $(M, \nabla^{(\alpha)}, g )$ to be of constant curvature.
\begin{Thot}\label{theo4_1}
Let $(M, \nabla, g )$  be a trivial statistical manifold of constant curvature $k$,  $(\tilde M, \tilde\nabla, \tilde g )$ a statistical manifold of constant curvature  $\tilde k$ with a Riemannian manifold of constant curvature  $\stackrel{\circ}{\tilde k}(\neq\tilde k) ~ (\tilde M, \tilde\nabla^{\circ}, \tilde g)$, and  $f:M\rightarrow  \tilde M$  a statistical immersion of codimension one. Then  $f:M\rightarrow  \tilde M$  is equiaffine, that is, $\tau^{*}$  vanishes. 
\end{Thot}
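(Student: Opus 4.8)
The goal is to show that the normal Weingarten coefficient $\tau^{*}$ vanishes. The plan is to first record the algebraic identity $\tau+\tau^{*}=0$, then fuse the three Codazzi equations coming from $\tilde\nabla$, $\tilde\nabla^{*}$ and the Levi--Civita connection $\tilde\nabla^{\circ}$ into a single relation, and finally feed in the two constant-curvature hypotheses through the Gauss equations to destroy the remaining freedom. First I would exploit that $\xi$ is a $\tilde g$-unit normal: differentiating $\tilde g(f_{*}Y,\xi)=0$ and $\tilde g(\xi,\xi)=1$ and using the duality of $\tilde\nabla$ and $\tilde\nabla^{*}$ gives $h(X,Y)=g(AX,Y)$, $h^{*}(X,Y)=g(A^{*}X,Y)$ and $\tau+\tau^{*}=0$; moreover $\tilde\nabla^{\circ}=\tfrac12(\tilde\nabla+\tilde\nabla^{*})$ forces $II=\tfrac12(h+h^{*})$ and $S=\tfrac12(A+A^{*})$. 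Since $(M,\nabla,g)$ is trivial, $K=0$, so the induced connections coincide, $\nabla=\nabla^{*}=\nabla^{\circ}=:D$, and $R=R^{*}=R^{\circ}=k\{g(Y,Z)X-g(X,Z)Y\}$.

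Next I would extract the Codazzi equations. Because $(\tilde M,\tilde\nabla,\tilde g)$ has constant curvature $\tilde k$, the tensor $\tilde R(f_{*}X,f_{*}Y)f_{*}Z=\tilde k\{g(Y,Z)f_{*}X-g(X,Z)f_{*}Y\}$ is tangential, so the normal part of its Gauss expansion vanishes; a direct computation (torsion-freeness cancels the $h(\nabla_{\bullet}\bullet,\bullet)$ terms) yields $(D_{X}h)(Y,Z)-(D_{Y}h)(X,Z)+h(Y,Z)\tau^{*}(X)-h(X,Z)\tau^{*}(Y)=0$. The curvature $\tilde R^{*}$ of $\tilde\nabla^{*}$ is likewise of constant curvature $\tilde k$, so the same computation with $h\mapsto h^{*}$ and $\tau^{*}\mapsto\tau=-\tau^{*}$ holds, and for $\tilde\nabla^{\circ}$, whose Weingarten map has no normal component, the Codazzi equation is simply $(D_{X}II)(Y,Z)=(D_{Y}II)(X,Z)$. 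Adding the first two and using $II=\tfrac12(h+h^{*})$ to annihilate the derivative terms via the third leaves the purely algebraic identity
\begin{equation*}
(h-h^{*})(Y,Z)\,\tau^{*}(X)=(h-h^{*})(X,Z)\,\tau^{*}(Y),\qquad\forall X,Y,Z\in TM.
\end{equation*}

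Finally I would use the Gauss equations to pin down $h$ and $h^{*}$. The tangential Gauss equation for $\tilde\nabla$ together with $R=k\{\cdots\}$ gives $h(X,Z)A^{*}Y-h(Y,Z)A^{*}X=(\tilde k-k)\{g(Y,Z)X-g(X,Z)Y\}$, and the Riemannian Gauss equation gives $II(X,Z)SY-II(Y,Z)SX=(\stackrel{\circ}{\tilde k}-k)\{g(Y,Z)X-g(X,Z)Y\}$. Diagonalising the $g$-self-adjoint operators and comparing components (for $\dim M\ge 3$) forces the immersion to be totally umbilical, $h=\beta g$ and $h^{*}=\alpha^{*}g$, with $\alpha^{*}\beta=k-\tilde k$ and $\big(\tfrac{\alpha^{*}+\beta}{2}\big)^{2}=k-\stackrel{\circ}{\tilde k}$. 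These two scalar relations force $\alpha^{*}\beta=\big(\tfrac{\alpha^{*}+\beta}{2}\big)^{2}$, i.e. $(\alpha^{*}-\beta)^{2}=0$, exactly when $k-\tilde k=k-\stackrel{\circ}{\tilde k}$, that is, when $\tilde k=\stackrel{\circ}{\tilde k}$; since by hypothesis $\stackrel{\circ}{\tilde k}\neq\tilde k$, we conclude $\alpha^{*}\neq\beta$. Then $h-h^{*}=(\beta-\alpha^{*})g$ is a nonzero multiple of $g$, the displayed identity becomes $(\beta-\alpha^{*})\{g(Y,Z)\tau^{*}(X)-g(X,Z)\tau^{*}(Y)\}=0$, and choosing $Z=X$ with $X$ of unit length while letting $Y$ range over $X^{\perp}$ gives $\tau^{*}=0$.

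The main obstacle is the umbilicity step: the identity produced by the Codazzi equations alone only says that, wherever $\tau^{*}\neq0$, the tensor $h-h^{*}$ is rank one and proportional to $\tau^{*}\otimes\tau^{*}$, and it is precisely the two constant-curvature hypotheses, entering through the Gauss equations, that upgrade this to $h-h^{*}\propto g$ and thereby force the contradiction. I expect to handle the low-dimensional case $\dim M\le 2$ and the degenerate sub-cases $\tilde k=k$ or $\stackrel{\circ}{\tilde k}=k$ (in which one Gauss equation degenerates) separately, using the companion Gauss equation, which stays non-degenerate because $\stackrel{\circ}{\tilde k}\neq\tilde k$ guarantees that $\tilde k-k$ and $\stackrel{\circ}{\tilde k}-k$ cannot vanish simultaneously.
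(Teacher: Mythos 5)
Your overall architecture matches the paper's more closely than it may appear: your fused Codazzi identity $\tau^{*}(X)(h-h^{*})(Y,Z)=\tau^{*}(Y)(h-h^{*})(X,Z)$ is exactly the paper's Eq.~(4.4) (there written for the shape operators, $\tau^{*}(X)B^{*}Y=\tau^{*}(Y)B^{*}X$ with $B^{*}=\frac{1}{2}(A^{*}-A)$), and your closing step is the paper's. The gap is in the middle. Total umbilicity of $h$ and $h^{*}$ does \emph{not} follow from the Gauss equations when $\dim M=2$: in an orthonormal frame $\{e_{1},e_{2}\}$ take $h(e_{1},e_{2})=h(e_{2},e_{1})=1$, $h(e_{i},e_{i})=0$ and $h^{*}(e_{1},e_{2})=h^{*}(e_{2},e_{1})=-c$, $h^{*}(e_{i},e_{i})=0$. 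This satisfies the Gauss equation for $\tilde\nabla$ with $k-\tilde k=c$, its dual, and the Riemannian Gauss equation with $k-\stackrel{\circ}{\tilde k}=-\frac{(1-c)^{2}}{4}$, and it has $\tilde k-\stackrel{\circ}{\tilde k}=-\frac{(1+c)^{2}}{4}\neq 0$ whenever $c\neq -1$; yet neither $h$ nor $h^{*}$ is a multiple of $g$ (indeed for $c=1$ umbilicity is algebraically impossible since it would force $\alpha^{*}+\beta=0$ and $\alpha^{*}\beta=1$ simultaneously). So the case $\dim M\le 2$ you defer is not a routine afterthought -- your main deduction genuinely fails there -- and even for $\dim M\ge 3$ the sub-cases $k=\tilde k$, $k=\stackrel{\circ}{\tilde k}$ require the rank-one analysis you only gesture at.

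The repair is to aim for less, which is what the paper does: your Codazzi identity kills $\tau^{*}$ as soon as $h-h^{*}$ has rank at least two, so non-degeneracy suffices and umbilicity is never needed. Averaging the Gauss equations for $\tilde\nabla$ and $\tilde\nabla^{*}$ and subtracting the Riemannian Gauss equation cancels all the $II$- and $S$-terms and leaves, with $b:=\frac{1}{2}(h-h^{*})$,
\begin{equation*}
b(Y,Z)b(X,W)-b(X,Z)b(Y,W)=(\tilde k-\stackrel{\circ}{\tilde k})\{g(Y,Z)g(X,W)-g(X,Z)g(Y,W)\},
\end{equation*}
which is the content of the paper's Eq.~(4.3). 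Diagonalising the self-adjoint operator associated with $b$, every product $\lambda_{i}\lambda_{j}$ of eigenvalues with $i\neq j$ equals the nonzero constant $\tilde k-\stackrel{\circ}{\tilde k}$, so no eigenvalue vanishes and $h-h^{*}$ is invertible in every dimension $\ge 2$, with no case split on $k$ versus $\tilde k$ or $\stackrel{\circ}{\tilde k}$; my two-dimensional example above illustrates exactly this (invertible but not umbilical). With that one substitution your proof closes and coincides in substance with the paper's.
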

\begin{proof}
If $(\tilde M, \tilde\nabla, \tilde g )$  is a statistical manifold of constant curvature $\tilde k$  with a Riemannian manifold of constant curvature  $\stackrel{\circ}{\tilde k}(\neq\tilde k) ~ (\tilde M, \tilde\nabla^{\circ}, \tilde g)$, the following equation
\begin{eqnarray}
&& (\tilde\nabla_{X} \tilde K)(f_{*}Y,f_{*}Z)-(\tilde\nabla_{Y} \tilde K)(f_{*}X,f_{*}Z) \nonumber \\
&& \qquad \qquad = 2\{ \tilde R(f_{*}X,f_{*}Y)f_{*}Z-{\tilde R}^{\circ}(f_{*}X,f_{*}Y)f_{*}Z\} \\ 
&& \qquad \qquad = 2(\tilde k-\stackrel{\circ}{\tilde k})\{ \tilde g(f_{*}Y,f_{*}Z)f_{*}X-\tilde g(f_{*}X,f_{*}Z)f_{*}Y \} \nonumber
\end{eqnarray}
holds by Eq.(2.2) and Eq.(2.3) in \cite{furu}. By above equation and equation Eq.(3.6) in \cite{furu}, we have
\begin{eqnarray}
&& -2(\tilde k-\stackrel{\circ}{\tilde k}) \{  g(Y,Z)X- g(X,Z)Y \} = (\nabla_{X} K)(Y,Z)-(\nabla_{Y}K)(X,Z)\nonumber\\  
&& \qquad \qquad \qquad \qquad   -b(Y,Z)A^{*}X+b(X,Z)A^{*}Y+h(X,Z)B^{*}Y-h(Y,Z)B^{*}X\nonumber\\
&& 0=(\nabla_{X}b)(Y,Z)-(\nabla_{Y}b)(X,Z)+\tau ^{*}(X)b(Y,Z)-\tau ^{*}(Y)b(X,Z) \\
&& \qquad \qquad \qquad \qquad -\tau ^{*}(Y)h(X,Z)+\tau ^{*}(X)h(Y,Z)\nonumber\\
&& 0=-\tau ^{*}(Y)A^{*}X+-\tau ^{*}(X)A^{*}Y-(\nabla_{X}B^{*})Y+(\nabla_{Y}B^{*})X+\tau ^{*}(X)B^{*}Y-\tau ^{*}(Y)B^{*}X \nonumber\\
&& 0=-h(X,B^{*}Y)+h(Y,B^{*}X)+(\nabla_{X}\tau ^{*})(Y)-(\nabla_{Y}\tau ^{*})(X)+b(Y,A^{*}X)-b(X,A^{*}Y). \nonumber
\end{eqnarray}
By $K=0$, $B^{*}=A^{*}-S$  and Gauss equation (3.3)$_{1}$ in \cite{furu}, from Eq.(4.2)$_{1}$, we have 
\begin{eqnarray*}
&& -2(\tilde k-\stackrel{\circ}{\tilde k}) \{  g(Y,Z)X- g(X,Z)Y \} =-b(Y,Z)A^{*}X+b(X,Z)A^{*}Y \\  
&& \qquad \qquad \qquad \qquad   +h(X,Z)A^{*}Y-h(X,Z)SY-h(Y,Z)A^{*}X+h(Y,Z)SX \\
&& \qquad =-b(Y,Z)A^{*}X+b(X,Z)A^{*}Y-h(X,Z)SY+h(Y,Z)SX+ \tilde R (X,Y)Z-R (X,Y)Z.  
\end{eqnarray*}
By $b=h-II$, $B^{*}=A^{*}-S$  and Riemannian Gauss equation (3.5)$_{1}$ in \cite{furu}, we have
\begin{align*}
& -2(\tilde k-\stackrel{\circ}{\tilde k}) \{  g(Y,Z)X- g(X,Z)Y \} \\  
& \quad =-h(Y,Z)A^{*}X+II(Y,Z)A^{*}X+h(X,Z)A^{*}Y-II(X,Z)A^{*}Y\\
&  \qquad \qquad \qquad \qquad -h(X,Z)SY+h(Y,Z)SX+ \tilde R (X,Y)Z-R (X,Y)Z \\
& \quad =-h(Y,Z)B^{*}X+h(X,Z)B^{*}Y+II(Y,Z)B^{*}X+II(Y,Z)SX \\
&  \qquad \qquad \qquad \qquad -II(X,Z)B^{*}Y-II(X,Z)SY+ \tilde R (X,Y)Z-R (X,Y)Z  \\
& \quad =-b(Y,Z)B^{*}X+b(X,Z)B^{*}Y+ R^{\circ}(X,Y)Z- \tilde R^{\circ}(X,Y)Z+\tilde R (X,Y)Z-R (X,Y)Z. 
\end{align*}

Since  $(M, \nabla , g)$ is Riemannian manifold, clearly $R^{\circ} (X,Y)Z=R (X,Y)Z$. Hence we have
\begin{equation*}
 0=(\tilde k-\stackrel{\circ}{\tilde k}) \{  g(Y,Z)X- g(X,Z)Y \} -b(Y,Z)B^{*}X+b(X,Z)B^{*}Y.
\end{equation*}
And since $b(Y,Z)=g(BY,Z)$, $b(X,Z)=g(BX,Z)$, from above equation we have 

\begin{equation}
 0=(\tilde k-\stackrel{\circ}{\tilde k}) \{  g(Y,Z)X- g(X,Z)Y \} -g(BY,Z)B^{*}X+g(BX,Z)B^{*}Y.
\end{equation}
From Eq.(4.2)$_{3}$, $B^{*}=A^{*}-S$ and Codazzi equation on  $A$ we get
\begin{align}
0 &=-\tau ^{*}(Y)A^{*}X+\tau ^{*}(X)A^{*}Y-(\nabla_{X}A^{*})Y+(\nabla_{X}S)Y+(\nabla_{Y}A^{*})X-(\nabla_{Y}S)X \nonumber\\
& \qquad \qquad \qquad 
+\tau ^{*}(X)B^{*}Y-\tau ^{*}(Y)B^{*}X\nonumber\\
&=(\nabla_{X}S)Y-(\nabla_{Y}S)X+\tau ^{*}(X)B^{*}Y-\tau ^{*}(Y)B^{*}X \nonumber
\end{align}
and by $\nabla=\nabla^{\circ}$  and Codazzi equation on $S$, we also get
\begin{equation}
 0=\tau ^{*}(X)B^{*}Y-\tau ^{*}(Y)B^{*}X.
\end{equation}
From Eq.(4.2)$_{4}$, $B^{*}=A^{*}-S$  and Ricci equation we have 
\begin{equation*}
 b(X,B^{*}Y)-b(Y,B^{*}X)=0,
\end{equation*}
and since $b(X,B^{*}Y)=g(BX,B^{*}Y)$  and $b(Y,B^{*}X)=g(BY,B^{*}X)$, we have
\begin{equation*}
g(BX,B^{*}Y)-g(BY,B^{*}X)=0.
\end{equation*}

Since $g(BX,B^{*}Y)=g(B^{*}Y,BX)=b^{*}(BX,Y)=g(B^{*}BX,Y)$, we have
\begin{equation}
 0=-g([B,B^{*}]X,Y).
\end{equation}
From Eq.(4.5), $B$  and $B^{*}$  are simultaneously diagonalizable. 

In the case that $B^{*}$  is of the form $\lambda^{*}I$, we see easily that $\tau^{*}$  vanishes from Eq.(4.4) if $\lambda^{*}\neq 0$ and $\stackrel{\circ}{\tilde k}=\tilde k$ from Eq.(4.3) otherwise. 
In the case that  $B^{*}$ is not of the form $\lambda^{*}I$, there are $\lambda_{1}^{*}, \lambda_{2}^{*}$ with $\lambda_{1}^{*}\neq \lambda_{2}^{*}$ such  that $B^{*}X_{j}=\lambda_{j}^{*}X_{j}$, where  $g(X_{i},X{_j})=\delta_{ij}, ~ i,j=1,2$. Besides there are $\lambda_{1},\lambda_{2}$  such that $BX_{j}=\lambda_{j}X_{j}$. Eq.(4.3) implies that
\begin{align*}
& (\tilde k-\stackrel{\circ}{\tilde k})\{ g(X_{j},Z)X_{i}-g(X_{i},Z)X_{j}\} +\lambda_{j}\lambda_{i}^{*}g(X_{j},Z)X_{i}-\lambda_{i}\lambda_{j}^{*}g(X_{i},Z)X_{j} \\
& \qquad =(\tilde k-\stackrel{\circ}{\tilde k}+\lambda_{j}\lambda_{i}^{*})g(X_{j},Z)X_{i}-(\tilde k-\stackrel{\circ}{\tilde k}+\lambda_{i}\lambda_{j}^{*})g(X_{i},Z)X_{j} = 0 \\
\end{align*}
and hence $\tilde k-\stackrel{\circ}{\tilde k}+\lambda_{j}\lambda_{i}^{*}=\tilde k-\stackrel{\circ}{\tilde k}+\lambda_{i}\lambda_{j}^{*}=0$, which means that
\begin{equation*}
\lambda_{j}\lambda_{i}^{*}=\lambda_{i}\lambda_{j}^{*}=-(\tilde k-\stackrel{\circ}{\tilde k})\neq 0.
\end{equation*}
By Eq.(4.4) we have $\lambda_{2}^{*}\tau^{*}(X_{1})X_{2}-\lambda_{1}^{*}\tau^{*}(X_{2})X_{1}=0$, which implies that  $\tau^{*}$ vanishes.
\end{proof}


\noindent\textbf{Example 4.1.}
Suppose  $\tilde M$ be $\mathbf{R}^{3}$. We define Riemannian metric and an Affine connection by the following relations:
\begin{equation*}
\tilde g=a\sum d\theta^{i}d\theta^{i},
\end{equation*}
\begin{equation*}
\tilde \nabla_{\frac{\partial }{\partial \theta^{1}}}\frac{\partial }{\partial \theta^{1}}=\tilde b\frac{\partial }{\partial \theta^{1}} ,\tilde \nabla_{\frac{\partial }{\partial \theta^{2}}}\frac{\partial }{\partial \theta^{2}}=\frac{\tilde b}{2}\frac{\partial }{\partial \theta^{1}},\tilde \nabla_{\frac{\partial }{\partial \theta^{3}}}\frac{\partial }{\partial \theta^{3}}=\frac{\tilde b}{2}\frac{\partial }{\partial \theta^{1}},
\end{equation*}
\begin{equation*}
\tilde \nabla_{\frac{\partial }{\partial \theta^{1}}}\frac{\partial }{\partial \theta^{2}}=\tilde \nabla_{\frac{\partial }{\partial \theta^{2}}}\frac{\partial }{\partial \theta^{1}}=\frac{\tilde b}{2}\frac{\partial }{\partial \theta^{2}},\tilde \nabla_{\frac{\partial }{\partial \theta^{1}}}\frac{\partial }{\partial \theta^{3}}=\tilde \nabla_{\frac{\partial }{\partial \theta^{3}}}\frac{\partial }{\partial \theta^{1}}=\frac{\tilde b}{2}\frac{\partial }{\partial \theta^{2}},
\end{equation*}
\begin{equation*}
\tilde \nabla_{\frac{\partial }{\partial \theta^{2}}}\frac{\partial }{\partial \theta^{3}}=\tilde \nabla_{\frac{\partial }{\partial \theta^{3}}}\frac{\partial }{\partial \theta^{2}}=0.
\end{equation*}
Then $(\tilde M,\tilde\nabla,\tilde g)$  is a statistical manifold of constant curvature $-\frac{\tilde b^{2}}{4a}$  with a trivial statistical manifold of constant curvature 0  $(\tilde M,\tilde\nabla^{\circ},\tilde g)$. 
Suppose $M$  be $\mathbf{R}^{2}$, and  $(\nabla,g)$ an induced statistical structure from $(\tilde\nabla,\tilde g)$  by an immersion  $f:(x,y)(\in \mathbf{R}^{2})\mapsto (0,x,y)$. We remark that  $(M,\nabla,g)$ is a trivial statistical manifold of constant curvature 0.

Theorem \ref{theo3_2} and Theorem \ref{theo4_1} imply the following fact.


\begin{Coro}\label{coro4_1}
Let  $(M,\nabla,g)$ be a trivial statistical manifold of constant curvature $k$,  $(\tilde M,\tilde\nabla,\tilde g)$ a Hessian manifold of constant Hessian curvature $\tilde c$, and $f:M\rightarrow  \tilde M$  a statistical immersion of codimension one. Then $f:M\rightarrow  \tilde M$  is equiaffine, that is, $\tau^{*}$  vanishes.
\end{Coro}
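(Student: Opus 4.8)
The plan is to recognize the target manifold of the corollary as a special instance of the target manifold in Theorem \ref{theo4_1}, so that the conclusion becomes a direct application of that theorem. Concretely, I would show that a Hessian manifold $(\tilde M, \tilde\nabla, \tilde g)$ of constant Hessian curvature $\tilde c$ is simultaneously a statistical manifold of constant curvature $\tilde k = 0$ and carries a Riemannian structure $(\tilde M, \tilde\nabla^{\circ}, \tilde g)$ of constant curvature $\stackrel{\circ}{\tilde k} = -\tilde c/4$. Once these two constants are identified, the hypotheses of Theorem \ref{theo4_1} are exactly met, and $\tau^{*} = 0$ follows immediately.

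The two identifications come entirely from material already established. First, by definition a Hessian structure is a statistical structure of constant curvature $0$, so $\tilde\nabla$ is flat and $\tilde k = 0$ automatically. Second, I would reuse the computation in the proof of Theorem \ref{theo3_2}: from the constant Hessian curvature condition $(\tilde\nabla \tilde K)(Y,Z;X) = -\frac{\tilde c}{2}\{\tilde g(X,Y)Z + \tilde g(X,Z)Y\}$, together with the standard expression of $\tilde R^{\circ}$ in terms of $\tilde R$, $\tilde R^{*}$ and the difference tensor $\tilde K$, one obtains $\tilde R^{\circ}(X,Y)Z = -\frac{\tilde c}{4}\{\tilde g(Y,Z)X - \tilde g(X,Z)Y\}$. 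Hence $(\tilde M, \tilde\nabla^{\circ}, \tilde g)$ is of constant curvature $\stackrel{\circ}{\tilde k} = -\tilde c/4$.

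With $\tilde k = 0$ and $\stackrel{\circ}{\tilde k} = -\tilde c/4$ in hand, I would feed these into Theorem \ref{theo4_1}, whose source-manifold hypothesis (a trivial statistical manifold of constant curvature $k$) and whose codimension-one statistical immersion hypothesis coincide verbatim with those of the corollary. Theorem \ref{theo4_1} then yields that $f$ is equiaffine, that is, $\tau^{*}$ vanishes.

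The only real obstacle I anticipate is the nondegeneracy requirement $\stackrel{\circ}{\tilde k} \neq \tilde k$ that is built into Theorem \ref{theo4_1}; here it reads $-\tilde c/4 \neq 0$, i.e. $\tilde c \neq 0$. For $\tilde c \neq 0$ the reduction is clean. I would note explicitly that the case $\tilde c = 0$ is degenerate, since then both $\tilde\nabla$ and $\tilde\nabla^{\circ}$ are flat and Theorem \ref{theo4_1} does not apply verbatim; this case must therefore either be excluded by the standing hypothesis on $\tilde c$ or be disposed of separately.
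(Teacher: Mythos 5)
Your proposal is correct and follows exactly the route the paper intends: the paper justifies Corollary \ref{coro4_1} simply by the remark that Theorem \ref{theo3_2} and Theorem \ref{theo4_1} imply it, and your identifications $\tilde k = 0$ and $\stackrel{\circ}{\tilde k} = -\tilde c/4$ (extracted from the computation inside the proof of Theorem \ref{theo3_2}) are precisely the details that make that remark work. Your explicit flag that the case $\tilde c = 0$ violates the hypothesis $\stackrel{\circ}{\tilde k}\neq\tilde k$ of Theorem \ref{theo4_1} is a point the paper silently glosses over, and is worth keeping.
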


We consider a shape operator of statistical immersion of a trivial statistical manifold of constant curvature into a Hessian manifold of constant Hessian curvature.


\begin{lemm}\label{lemm4_1}
Let $(M,\nabla,g)$  be a trivial statistical manifold of constant curvature  $k$, $(\tilde M,\tilde\nabla,\tilde g)$ a Hessian manifold of constant Hessian curvature $\tilde c$, and  $f:M\rightarrow  \tilde M$ a statistical immersion of codimension one. Then the following holds: 
\begin{equation*}
A^{*}=k\nu\tilde c^{-1}I,B^{*}=-\frac{1}{2}\nu I,h=\tilde c\nu^{-1}g, A=\tilde c\nu^{-1}I, B=[2\tilde c^{2}-(2k+\tilde c)\nu^{2}](2\nu \tilde c)^{-1}I.
\end{equation*}
\end{lemm}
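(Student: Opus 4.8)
The plan is to reduce everything to pointwise linear algebra on the shape operators, exploiting that the two background connections on $\tilde M$ are pinned down. Since $(\tilde M,\tilde\nabla,\tilde g)$ is a Hessian manifold of constant Hessian curvature $\tilde c$, it is flat ($\tilde R=0$) and, by the computation in the proof of Theorem \ref{theo3_2}, its Levi--Civita connection satisfies $\tilde R^{\circ}(X,Y)Z=-\frac{\tilde c}{4}\{\tilde g(Y,Z)X-\tilde g(X,Z)Y\}$, i.e. $(\tilde M,\tilde\nabla^{\circ},\tilde g)$ has constant curvature $\stackrel{\circ}{\tilde k}=-\tilde c/4\neq\tilde k=0$. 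Hence Theorem \ref{theo4_1} (equivalently Corollary \ref{coro4_1}) applies and gives $\tau^{*}=0$, so $f$ is equiaffine. First I would also record the formal identities forced by duality and by $\nabla^{\circ}=\frac12(\tilde\nabla+\tilde\nabla^{*})$: namely $h(X,Y)=g(AX,Y)$, $h^{*}(X,Y)=g(A^{*}X,Y)$, $S=\frac12(A+A^{*})$, $B^{*}=A^{*}-S$ and $B=A-S$, whence $B=-B^{*}$; and, since $M$ is trivial, $\nabla=\nabla^{\circ}=\nabla^{*}$, $K=0$ and $R=R^{\circ}=R^{*}=k\{g(Y,Z)X-g(X,Z)Y\}$.

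Next I would write out the statistical Gauss, Codazzi and Ricci equations of the immersion (the equations (3.3)--(3.6) of \cite{furu} already used in Theorem \ref{theo4_1}) together with the ambient constant-Hessian-curvature identity $(\tilde\nabla_{W}\tilde K)(U,V)=-\frac{\tilde c}{2}\{\tilde g(W,U)V+\tilde g(W,V)U\}$, and decompose each into its tangential and normal parts along $\xi$. Three consequences are load-bearing. From the tangential Gauss equation with $\tilde R=0$ and $R=k\{\cdots\}$ one gets $h(Y,Z)A^{*}X-h(X,Z)A^{*}Y=k\{g(Y,Z)X-g(X,Z)Y\}$. Taking $U=V=\xi$ in the Hessian identity and projecting tangentially yields $(\rho I+2B^{*})A^{*}=0$ with $\rho:=\tilde g(\tilde K(\xi,\xi),\xi)$, while its normal projection forces $\rho$ to be constant; I would set $\nu:=\rho$. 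Finally the mixed component ($U=\xi$, $V$ tangent) produces a scalar Codazzi relation linking $h$, $B^{*}$, $A^{*}$ and $\tilde c$.

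The heart of the argument, and the step I expect to be the main obstacle, is upgrading ``simultaneously diagonalizable'' (which is all that equation (4.5) gave in Theorem \ref{theo4_1}) to ``scalar multiple of the identity''; here the extra rigidity of constant \emph{Hessian} curvature, as opposed to mere constant curvature of $\tilde\nabla$ and $\tilde\nabla^{\circ}$, is essential. When $k\neq0$ the operator $A^{*}$ is invertible, so $(\rho I+2B^{*})A^{*}=0$ immediately gives $B^{*}=-\frac12\nu I$; substituting this back into $h(Y,Z)A^{*}X-h(X,Z)A^{*}Y=k\{\cdots\}$ and running the eigenvalue/eigenvector comparison exactly as in the last paragraph of the proof of Theorem \ref{theo4_1} forces $A^{*}$, and then $A$ (through $h=g(A\cdot,\cdot)$), to have a single eigenvalue, i.e. to be scalar. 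The degenerate case $k=0$, where $A^{*}$ may fail to be invertible, I would treat separately using $\nabla b=0$ --- itself read off from the normal part of the tangential--tangential Hessian identity --- together with the simultaneous diagonalizability to eliminate a second eigenvalue.

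Once every operator is a scalar multiple of $I$, write $A=aI$, $A^{*}=a^{*}I$, $B^{*}=-\frac12\nu I$; then $S=\frac12(a+a^{*})I$, $h=a\,g$, and the problem collapses to three scalar equations: the Gauss relation gives $aa^{*}=k$; the identity $B^{*}=\frac12(A^{*}-A)$ gives $a-a^{*}=\nu$; and the mixed Codazzi relation gives $a\nu=\tilde c$, equivalently $a^{2}=k+\tilde c$ together with the compatibility constraint $\nu^{2}(k+\tilde c)=\tilde c^{2}$. Solving this system yields $a=\tilde c\nu^{-1}$ and $a^{*}=k\nu\tilde c^{-1}$, i.e. $A=\tilde c\nu^{-1}I$, $A^{*}=k\nu\tilde c^{-1}I$ and $h=\tilde c\nu^{-1}g$; moreover $B=-B^{*}=\frac12\nu I$, which, after eliminating $\nu$ through the constraint, is exactly $B=[2\tilde c^{2}-(2k+\tilde c)\nu^{2}](2\nu\tilde c)^{-1}I$. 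This reproduces all five formulae, and the final rewriting is the only place the constraint $\nu^{2}(k+\tilde c)=\tilde c^{2}$ is invoked.
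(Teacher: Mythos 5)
Your overall architecture agrees with the paper's: first get $\tau^{*}=0$ from Corollary \ref{coro4_1}, then show all the operators are scalar, then solve a small scalar system. But the way you establish scalarity is genuinely different. The paper works only with the \emph{antisymmetrized} (Codazzi-type) system (4.6) and extracts scalarity by taking traces: a trace of (4.6)$_{1}$ combined with (4.6)$_{4}$ gives $h=\frac{\tilde c}{2}(n+2)(\nu-\mathrm{tr}\,B^{*})^{-1}g$ outright (so $A$ is scalar with no case distinction), and feeding this into the traced Gauss equation forces $A^{*}=aI$; only afterwards does it invoke invertibility of $A^{*}$ (for $k\neq 0$) to pin down $B^{*}=-\frac{\nu}{2}I$. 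You instead go to the un-antisymmetrized $\xi$--$\xi$ component of the Hessian identity to get $(\nu I+2B^{*})A^{*}=0$ first, and then run an eigenvalue comparison on the Gauss equation. Your route is arguably more transparent --- the paper's step ``by Eq.(4.8) we have $B^{*}=-\frac{\nu}{2}I$'' is in fact only justified by exactly the identity $(\nu I+2B^{*})A^{*}=0$ that you make explicit --- but it leans harder on invertibility of $A^{*}$, and your eigenvalue comparison needs two things you do not supply: that $A$ and $A^{*}$ commute (which follows only \emph{after} $B^{*}$ is known to be scalar, via $A-A^{*}=-2B^{*}$), and that $\nu\neq 0$ (otherwise $a_{i}a_{j}^{*}=k$ for $i\neq j$ does not force a single eigenvalue).

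Two more substantive points. First, the case $k=0$ is not a removable footnote: it is the only case realized by the paper's Example 4.2 (there $A^{*}=0$, so $(\nu I+2B^{*})A^{*}=0$ says nothing about $B^{*}$, and your ``eliminate a second eigenvalue via $\nabla b=0$'' is a hope, not an argument). The paper's own proof also only completes the branch $k\neq 0$, so you have reproduced rather than repaired this gap, but you should not present the $k=0$ case as handled. Second, your (correct and unavoidable) identities $S=\frac{1}{2}(A+A^{*})$, hence $B=-B^{*}$, force the compatibility constraint $(k+\tilde c)\nu^{2}=\tilde c^{2}$ as soon as the five formulae of the lemma are all asserted; you use exactly this constraint to convert $B=\frac{\nu}{2}I$ into the stated expression $[2\tilde c^{2}-(2k+\tilde c)\nu^{2}](2\nu\tilde c)^{-1}I$. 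The paper never derives this constraint in the lemma, and Theorem \ref{theo4_2} instead derives $(2k+\tilde c)\nu^{2}=\tilde c^{2}$, which is compatible with yours only when $k=0$. So your proof does not merely reprove the lemma by another route --- it exposes a tension between Lemma \ref{lemm4_1} and Theorem \ref{theo4_2} that you must resolve (by checking which of the two relations the structure equations actually yield) before the argument can be considered complete.
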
 


\begin{proof} 
Combining Eq.(2.3) and Eq.(3.6) in \cite{furu} with Eq.(2.1), we have
\begin{eqnarray}
&& \frac{\tilde c}{2} \{  g(Y,Z)X- g(X,Z)Y \} =2( k-\stackrel{\circ} k) \{  g(Y,Z)X- g(X,Z)Y \}  \nonumber\\  
&& \qquad \qquad  \qquad   -b(Y,Z)A^{*}X+b(X,Z)A^{*}Y+h(X,Z)B^{*}Y-h(Y,Z)B^{*}X\nonumber\\
&& 0=h(X,K(Y,Z))-h(Y,K(X,Z))+(\nabla_{X}b)(Y,Z)-(\nabla_{Y}b)(X,Z)\nonumber\\ 
&& \qquad \qquad  \qquad   +\tau^{*}(X)b(Y,Z)-\tau^{*}(Y)b(X,Z)-\tau^{*}(Y)h(X,Z)+\tau^{*}(X)h(Y,Z) \\
&& 0=K(Y,A^{*}X)-K(X,A^{*}Y)-\tau^{*}(Y)A^{*}X+\tau^{*}(X)A^{*}Y\nonumber\\  
&& \qquad \qquad  \qquad   -(\nabla_{X}B^{*})Y+(\nabla_{Y}B^{*})X+\tau^{*}(X)B^{*}Y-\tau^{*}(Y)B^{*}X\nonumber\\
&& 0=-h(X,B^{*}Y)+h(Y,B^{*}X)+(\nabla_{X}\tau^{*})(Y)-(\nabla_{Y}\tau^{*})(X)+b(Y,A^{*}X)-b(X,A^{*}Y)\nonumber 
\end{eqnarray}
Taking the trace of (4.6)$_{1}$ with respect to $X$, we have
\begin{equation*}
-\tilde c g(Y,Z)=-tr A^{*}b(Y,Z)+h(B^{*}Z,Y)+h(B^{*}Y,Z)
\end{equation*}
and taking the trace of (4.6)$_{1}$  with respect to $Y$, we have 
\begin{equation*}
-\frac{\tilde c}{2}(n+1)g(X,Z)=-b(A^{*}X,Z)+h(X,B^{*}Z)+tr B^{*} h(X,Z) .
\end{equation*}
Using the above equation and Eq.(4.6)$_{4}$, we have 
\begin{eqnarray}
 && -\frac{\tilde c}{2}(n+2)g(X,Y)=-b(A^{*}X,Y)+h(X,B^{*}Y)+tr B^{*} h(Y,Y) .\nonumber\\
 && \qquad  \qquad  \qquad  \qquad \qquad -h(X,Y)\nu -h(X,B^{*}Y)+(\nabla_{X}\tau^{*})Y+b(Y,A^{*}X)\nonumber\\
&&  \qquad  \qquad  \qquad \qquad =tr B^{*}h(X,Y)-h(X,Y)\nu +(\nabla_{x}\tau^{*})Y\nonumber
\end{eqnarray}
and  since from Corollary \ref{lemm4_1} $\tau^{*}=0$ holds, we have 
\begin{equation*}
(\nu -tr B^{*})h(X,Y)=\frac{\tilde c}{2}(n+2)g(X,Y).
\end{equation*}
Hence we have
\begin{equation}
h=\frac{\tilde c}{2}(n+2)(\nu -tr B^{*})^{-1}g.
\end{equation}

If $\tilde c \neq 0$  holds, $h$  is non-degenerated.

Since $\tilde \nabla$  is flat in Gaussian equation in \cite{furu}, we obtain
\begin{equation*}
k\{ g(Y,Z)X-g(X,Z)Y\} =h(Y,Z)A^{*}X-h(X,Z)A^{*}Y
\end{equation*}
and  taking the trace of above equation with respect to $X$, we have
\begin{equation*}
k(n-1)g(Y,Z)=tr A^{*} h(Y,Z)-h(A^{*}Y,Z)=h((tr A^{*}I-A^{*})Y,Z) .
\end{equation*}

Since the above equation and Eq.(4.7) imply that 
\begin{equation*}
k(n-1)I=\frac{\tilde c}{2}(n+2)(\nu-tr B^{*})^{-1}(tr A^{*}I-A^{*}),
\end{equation*}
there is  $a\in\mathbf{R}$ such that $A^{*}=aI$  and $tr A^{*}=an$.
Therefore the above equation implies that 
\begin{equation*}
k(n-1)I=\frac{\tilde c}{2}(n+2)(\nu-tr B^{*})^{-1}(na-a)I
\end{equation*}
and thus since
\begin{equation*}
2k(\nu -tr B^{*})=\tilde c(n+2)a,
\end{equation*}
we have
\begin{equation}
A^{*}=2k(\nu -tr B^{*})[\tilde c(n+2)]^{-1}I.
\end{equation}

If $k\neq 0$  holds,then since $A^{*}$  is non-degenerated, by Eq.(4.8) we have
\begin{equation*}
B^{*}=-\frac{\nu}{2}I, ~ tr B^{*}=-\frac{n\nu}{2}
\end{equation*}
and
\begin{equation*}
A^{*}=\frac{2k(\nu +\frac{n\nu}{2})}{\tilde c(n+2)}I=\frac{k\nu}{\tilde c}I, h=\frac{\tilde c}{2}(n+2)(\nu +\frac{n\nu}{2})^{-1}g=\frac{\tilde c}{\nu}g .
\end{equation*}
Since $h(X,Y)=g(AX,Y)$, we have $A=\frac{\tilde c}{\nu}I$  and
\begin{equation*}
B=B^{*}+(A-A^{*})=-\frac{\nu}{2}I +(\frac{\tilde c}{\nu}-\frac{k\nu}{\tilde c})I=\frac{-\nu^{2}\tilde c+2\tilde c^{2}-2k\nu^{2}}{2\nu \tilde c}I=\frac{2\tilde c^{2}-(2k+\tilde c)\nu^{2}}{2\nu\tilde c}I.
\end{equation*}
\end{proof}


\begin{Thot}\label{theo4_2}
Let $(M, \nabla, g )$ be a trivial statistical manifold of constant curvature $k$, $(\tilde M,\tilde\nabla,\tilde g)$  a Hessian manifold of constant Hessian curvature $\tilde c$. If there is a statistical immersion of codimension one $f:M\rightarrow  \tilde M$,  $2k+\tilde c$ is of non-negative. Moreover, when $\tilde c$  is positive, the Riemannian shape operator of $f:M\rightarrow  \tilde M$   is given by $S=\pm \frac{1}{2}\sqrt{2k+\tilde c}I$.
\end{Thot}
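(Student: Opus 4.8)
The plan is to reduce everything to scalar relations via Lemma \ref{lemm4_1} and then fix the one remaining degree of freedom, the function $\nu$, from the single structure equation that Lemma \ref{lemm4_1} leaves unused.

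By Corollary \ref{coro4_1} the immersion is equiaffine, so $\tau^{*}=0$, and Lemma \ref{lemm4_1} shows that every operator attached to $f$ is pointwise a scalar multiple of the identity, written in terms of $k$, $\tilde c$ and a single function $\nu$. Using the relation $B^{*}=A^{*}-S$ from the Gauss--Weingarten equations together with $A^{*}=k\nu\tilde c^{-1}I$ and $B^{*}=-\tfrac12\nu I$, the Riemannian shape operator becomes
\begin{equation*}
S=A^{*}-B^{*}=\frac{(2k+\tilde c)\nu}{2\tilde c}\,I .
\end{equation*}
Hence the entire statement reduces to pinning down $\nu$ as a function of $k$ and $\tilde c$.

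The decisive step is to observe that the proof of Lemma \ref{lemm4_1} exhausted only the Gauss and Codazzi equations for the flat connection $\tilde\nabla$ and their traces, so the Gauss equation comparing the Levi-Civita curvatures is still available. Since $(\tilde M,\tilde\nabla,\tilde g)$ is Hessian of constant Hessian curvature $\tilde c$, the computation in the proof of Theorem \ref{theo3_2} identifies the constant curvature of $(\tilde M,\tilde\nabla^{\circ},\tilde g)$, while $(M,\nabla,g)=(M,\nabla^{\circ},g)$ has constant curvature $k$. Substituting $S=sI$ and $II=s\,g$ into the Riemannian Gauss equation and matching the coefficient of $g(Y,Z)X-g(X,Z)Y$ produces a scalar identity in $s$, $k$, $\tilde c$; after inserting the expressions of Lemma \ref{lemm4_1} and eliminating $\nu$ between the two expressions for $S$, this identity should take the form
\begin{equation*}
S^{2}=\frac{2k+\tilde c}{4}\,I .
\end{equation*}
Carrying out this reduction, that is, correctly tracking the normalization of the Hessian-curvature constant and simplifying, is where I expect the real work to lie; it is the main obstacle, since the whole conclusion hinges on the exact numerical factor that survives.

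Once $S^{2}=\tfrac14(2k+\tilde c)I$ is in hand both conclusions are immediate. The field $S$ is the shape operator of a Riemannian hypersurface, hence real and self-adjoint, so $S^{2}$ is positive semidefinite; comparing scalar factors gives $2k+\tilde c=4s^{2}\ge 0$, the non-negativity claim. The cases $k=0$ or $\tilde c=0$, which fall outside the non-degeneracy hypotheses of Lemma \ref{lemm4_1}, are treated directly from the Riemannian Gauss equation: for $\tilde c=0$ the ambient Levi-Civita structure is flat and one gets $k=s^{2}\ge 0$, while for $k=0$ one gets $s^{2}=\tilde c/4\ge 0$, and in both situations $2k+\tilde c\ge 0$. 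Finally, when $\tilde c>0$ the second fundamental form $h=\tilde c\nu^{-1}g$ is non-degenerate so Lemma \ref{lemm4_1} applies verbatim, and extracting the square root of $S^{2}=\tfrac14(2k+\tilde c)I$ (noting that $S$ is already a scalar multiple of $I$) yields $S=\pm\tfrac12\sqrt{2k+\tilde c}\,I$, the two signs corresponding to the two choices of unit normal field $\xi$.
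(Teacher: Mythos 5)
There is a genuine gap, and it sits exactly at the step you yourself flag as ``where the real work lies.'' You propose to pin down $S$ from the Riemannian Gauss equation alone, comparing the Levi-Civita curvature $k$ of $(M,\nabla^{\circ},g)$ with the ambient Levi-Civita curvature $\stackrel{\circ}{\tilde k}=-\tilde c/4$ computed in Theorem \ref{theo3_2}. But for an umbilic hypersurface with $S=sI$ and $II=s\,g$ that equation reads
\begin{equation*}
k\{g(Y,Z)X-g(X,Z)Y\}=-\frac{\tilde c}{4}\{g(Y,Z)X-g(X,Z)Y\}+s^{2}\{g(Y,Z)X-g(X,Z)Y\},
\end{equation*}
which forces $S^{2}=\left(k+\frac{\tilde c}{4}\right)I=\frac{4k+\tilde c}{4}I$, not $\frac{2k+\tilde c}{4}I$. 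The two constants coincide only when $k=0$ (which is why the horosphere picture of Example 4.2 does not reveal the mismatch), and correspondingly your route would yield non-negativity of $4k+\tilde c$ rather than of $2k+\tilde c$. ``Eliminating $\nu$ between the two expressions for $S$'' cannot repair this: the Riemannian Gauss equation determines $S^{2}$ outright, with no $\nu$ left in it. So the decisive identity $S^{2}=\frac{2k+\tilde c}{4}I$ is asserted rather than derived, and the derivation you sketch produces a different, incompatible constant.

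The paper obtains the constant $2k+\tilde c$ from a different source. It substitutes the scalar forms $B=[2\tilde c^{2}-(2k+\tilde c)\nu^{2}](2\nu\tilde c)^{-1}I$ and $B^{*}=-\frac{\nu}{2}I$ of Lemma \ref{lemm4_1} into the integrability condition (4.3),
\begin{equation*}
0=(\tilde k-\stackrel{\circ}{\tilde k})\{g(Y,Z)X-g(X,Z)Y\}-g(BY,Z)B^{*}X+g(BX,Z)B^{*}Y,
\end{equation*}
with $\tilde k-\stackrel{\circ}{\tilde k}=\tilde c/4$, and reads off the scalar relation $\tilde c^{2}=(2k+\tilde c)\nu^{2}$; since $\nu^{2}>0$ this gives $2k+\tilde c\geq 0$, and then $S=A^{*}-B^{*}=\frac{(2k+\tilde c)\nu}{2\tilde c}I=\pm\frac{|\tilde c|}{2\tilde c}\sqrt{2k+\tilde c}\,I$. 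Equation (4.3) mixes the statistical and Riemannian second fundamental forms through $B=A-S$ and $B^{*}=A^{*}-S$ and is not a consequence of the Riemannian Gauss equation by itself; it is precisely the input your argument omits. Your preliminary observations (equiaffinity via Corollary \ref{coro4_1}, $S=A^{*}-B^{*}$ being a scalar operator, the sign ambiguity coming from the choice of $\xi$, positivity of $S^{2}$) are fine, but they do not substitute for this missing computation, so the proposal as written does not prove the stated formula.
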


\begin{proof} 
By Lemma \ref{lemm4_1} and Eq.(4.2), we have
\begin{eqnarray*}
&& \frac{\tilde c}{4} \{ g(Y,Z)X-g(X,Z)Y\} +\frac{2\tilde c^{2}-(2k+\tilde c)\nu^{2}}{2\nu\tilde c}(-\frac{\nu}{2})\{ g(Y,Z)X-g(X,Z)Y\} \\
&& \qquad = \left[\frac{\tilde c}{4}-\frac{2\tilde c^{2}-(2k+\tilde c)\nu^{2}}{4\tilde c} \right]\{ g(Y,Z)X-g(X,Z)Y\} = 0 
\end{eqnarray*}
and thus conclude that 
\begin{equation*}
\frac{\tilde c}{4}-\frac{2\tilde c^{2}-(2k+\tilde c)\nu^{2}}{4\tilde c}=0 .
\end{equation*}
Since $\tilde c^{2}=(2k+\tilde c)\nu^{2}$, we have $2k+\tilde c\geq 0$  and
\begin{equation*}
\nu=\pm \frac{|\tilde c|}{\sqrt{2k+\tilde c}}.
\end{equation*}
Thus the Riemannian shape operator $S$  is given by 
\begin{equation*}
S=A^{*}-B^{*}=(\frac{k\nu}{\tilde c}+\frac{\nu}{2})I=\frac{2k+\tilde c}{2\tilde c}(\pm \frac{|\tilde c|}{\sqrt{2k+\tilde c}})I=\pm \frac{|\tilde c|}{2\tilde c}\sqrt{2k+\tilde c}I.
\end{equation*}
When $\tilde c$  is positive, we have $S=\pm \frac{1}{2}\sqrt{2k+\tilde c}I$.
\end{proof}


\noindent\textbf{Example 4.2.}
Let $(H, \tilde\nabla, \tilde g)$  be the  $(n+1)-$dimensional upper half Hessian space of constant Hessian curvature 4 as in Example 2.1. For a constant $y_{0}>0$, write the following immersion by $f$: 
\begin{equation*}
(y^{1},\cdot\cdot\cdot ,y^{n})^{T}(\in\mathbf{R}^{n})\mapsto (y^{1},\cdot\cdot\cdot ,y^{n},y_{0})^{T}\in H .
\end{equation*}
Let $(\nabla, g)$  be the statistical structure on $\mathbf R^{n}$  induced by $f$  from $(\tilde\nabla,\tilde g)$. Then $(\mathbf R^{n}, \nabla, g)$  is a trivial statistical manifold of constant curvature 0 and  $f$ is a statistical immersion of a trivial statistical manifold of constant curvature into Hessian manifold of constant Hessian curvature. \\

\textbf{Acknowledgment}
The authors would like to thank the anonymous referees for their helpful comments and suggestions.

\end{document}